\newtheorem{remark}{Remark}
\newtheorem{theorem}{Theorem}
\def\R{\mathbb{R}}
\newtheorem{proposition}{Proposition}[section]
\date{}
\begin{document}

\title{Direct transformation from Cartesian into geodetic coordinates on a triaxial ellipsoid}

\author[1]{Gema M Diaz--Toca\thanks{gemadiaz@um.es}}
\author[1]{Leandro Marin \thanks{leandro@um.es}}
\author[2]{Ioana Necula\thanks{inecula@us.es}}
\affil[1]{Departamento de Ingenier\'ia y Tecnolog\'ia de Computadores, Universidad de Murcia, Spain}
\affil[2]{Departamento de Matem\'atica Aplicada I, Universidad de Sevilla, Spain}



\maketitle

\begin{abstract}
This paper presents two new direct symbolic-numerical algorithms for the transformation of Cartesian coordinates into geodetic coordinates considering the general case of a triaxial reference ellipsoid. The problem in both algorithms is reduced to finding a real positive root of a sixth degree polynomial. The first approach consists of algebraic manipulations of the equations describing the geometry of the problem and the second one uses Gr\"obner bases. In order to perform numerical tests and accurately compare efficiency and reliability, our algorithms together with the iterative methods presented by M. Ligas (2012) and J. Feltens (2009) have been implemented in C++. The numerical tests have been accomplished by considering 10 celestial bodies, referenced in the available literature. The obtained results clearly show that our algorithms improve the aforementioned iterative methods, in terms of both efficiency and accuracy. 
\end{abstract}

{\bf Keywords:} Coordinate transformation, Cartesian coordinates, Geodetic coordinates, Triaxial ellipsoid, Symbolic-numerical computation

\section{Introduction}\label{SecIntroduction}

Transformation between Cartesian and Geodetic coordinates is an important, basic problem frequently encountered in Astronomy, Geodesy and Geoinformatics. Both coordinates are defined with respect to a Cartesian reference system and, in the case of geodetic coordinates, an ellipsoid with the center at the origin of the Cartesian reference system is also considered. Although computing Cartesian coordinates from geodetic coordinates can be easily performed, the inverse transformation is a non-trivial, challenging problem. 

\noindent In our opinion, efficient innovative solutions of this problem, as well as another actual challenges faced in Geodesy and Geoinformatics reside in the application of algebraic computational techniques combined, if necessary, with numerical methods (see, for instance, \cite{awange18}).

In the particular case of a reference biaxial ellipsoid, numerous solutions have been proposed (see, for instance, \cite{feltens08}, \cite{fukushima99} and \cite{fukushima06} for iterative solutions, \cite{turner11} for perturbation techniques based solutions and \cite{bowring76}, \cite{laloirene09} and \cite{vermeille02} for closed form solutions). Interesting solutions have been recently developed in \cite{shu10}, \cite{soler12} and \cite{civicioglu12}.

Using as geometric model of the Earth a biaxial ellipsoid is barely justified by the computational simplicity of the approach, the existing standard reference systems (such as WGS 84) and the small difference between the axes in the equatorial plane (which rounds up to 69 m). Nevertheless, the triaxiality of the Earth has been studied in many papers during the last decades (see for instance \cite{bursa71}, \cite{bursa80}, \cite{heiskanen62} and \cite{souchay03}). Moreover, in \cite{grafarend14}, the authors explicitly state (on page 862), refering to the Earth's shape parameter:``Actually, with respect to the biaxial ellipsoid, fitting the triaxial ellipsoid is 65\% better.''

Therefore, the Earth and other celestial bodies (some of them listed in Table \ref{ejes}) can be much more appropriately (in terms of accuracy of the geometric model) approximated by triaxial ellipsoids.  Furthermore, nowadays computational tools allow us to overcome the difficulty of working with three different semiaxes.

Historically, the Earth and celestial bodies with rather small diferences between semiaxes, had initially been modelled by spheres, afterwards by biaxial ellipsoids and nowadays the triaxial ellipsoid modelling is emerging. In our opinion, it might be just a matter of time until standard reference systems have based on triaxial ellipsoid.

At our best knowledge, the general case of triaxial reference ellipsoid has been considered up to the moment only in \cite{feltens09} and \cite{ligas}, both approaches giving iterative solutions.  We present in this paper two new direct symbolic-numerical algorithms giving closed form solutions, which can be applied also to a biaxial reference ellipsoid.

Therefore, the novelty of our approaches resides in tackling the issue from the \textit{symbolic} perspective, accompanied by better efficiency and accuracy results in comparison with the iterative methods developed in \cite{feltens09} and \cite{ligas}, and in using a \textit{triaxial} reference ellipsoid. The symbolic perspective consists in generating some sixth degree polynomials, prove that they have only one positive root and afterwards compute them. In the proof of the uniqueness of the positive roots, the coefficients of these polynomials are not numerical values, but symbolic, generical expressions depending on the semiaxes of the reference ellipsoid and the cartesian coordinates of the considered point.

More concretely, in the algorithm called {\tt Cartesian into Geodetic I}, described in Section \ref{SecBell}, our closed form solution consists of finding the real positive root of a sixth degree polynomial in a variable $t$. This variable $t$ serves to describe the cartesian coordinates of the given point. On the other hand, the algorithm called {\tt Cartesian into Geodetic II}, described in Section \ref{SecGB}, also consists of finding the real positive root of a sixth degree polynomial but in the variable $z$, which represents the third coordinate of the three-dimensional coordinate system.

The structure of the paper is as follows: Section \ref{prel} introduces some preliminaries and definitions. Sections \ref{SecBell} and \ref{SecGB} introduce the results that lead us to the algorithms materialized at the end of each section. Each algorithm is based on the numeric computation of the unique real positive root of a sixth degree polynomial. Both polynomials are symbolically generated: in the first approach by algebraic manipulations of the equations describing the geometry of the problem and in the second approach by computing a Gr\"obner basis. The uniqueness of the real positive roots is proven symbolically, by applying Descartes' rule of signs and studying the relative positions of several ellipsoids. The algorithm presented in Section \ref{SecBell} computes firstly the parametric coordinate (a parameter which serves to describe the cartesian coordinates) of the given point and secondly the Cartesian coordinates of the corresponding footpoint (the intersection point of the ellipsoidal normal vector passing through the given point and the ellipsoid). The algorithm presented in Section \ref{SecGB} computes firstly the $z$ coordinate of the corresponding footpoint and secondly its $x$ and $y$ coordinates. 
The numerical tests performed with the celestial bodies listed in Table \ref{ejes}, together with the obtained results, are presented in Section \ref{SecResults}. In Section \ref{SecConclusions} we present the main conclusions and further work.

\section{Preliminaries}\label{prel}

Given a point $P_E$ on a triaxial ellipsoid, its Cartesian coordinates $(X_E,Y_E,Z_E)$ satisfy the ellipsoid equation $$
f(X,Y,Z)=\frac{X^2}{a_x^2}+ \frac{Y^2}{a_y^2}+ \frac{Z^2}{a_z^2}-1=0$$
and its geodetic and Cartesian coordinates are related as follows (see \cite{muller}): $$X_E = \nu\, \cos\varphi \cos\lambda ,\quad Y_E = \nu\,(1-e_e^2)\cos \varphi \sin\lambda , \quad Z_E = \nu\, (1-e^2_x)\sin\varphi,$$ where $\nu$ is equal to the radius of the prime vertical, $\nu = \dfrac{a_x}{\sqrt{1-e_x^2\sin^2\varphi-e_e^2\cos^2\varphi\sin^2\lambda}},$
and the first eccentricities squared are $$
e_x^2=\frac{a_x^2-a_z^2}{a_x^2}, \,e_y^2=\frac{a_y^2-a_z^2}{a_y^2},\,e_e^2=\frac{a_x^2-a_y^2}{a_x^2}.
$$
Obviously, if latitude $\varphi$ and longitude $\lambda$ are given, one obtains $(X_E,Y_E,Z_E)$ by substitutions. Viceversa, if the coordinates $(X_E,Y_E,Z_E)$ are given, then
\begin{eqnarray}\label{CtoG}
\lambda & = &  
\left \{\begin{matrix} 
        \arctan\left(\dfrac{1}{(1-e_e^2)}\dfrac{Y_E}{X_E}\right),\hfill &\text{ if } X_E > 0 \hfill \nonumber \\
        \arctan\left(\dfrac{1}{(1-e_e^2)}\dfrac{Y_E}{X_E}\right) +\pi, & \text{ if }X_E<0 \hfill \nonumber \\
        \mathrm{sign}(Y_E)\,\displaystyle{\frac{\pi}{2}}, \hfill & \text{ if }X_E=0 \text{ and }Y_E\neq 0 \hfill \nonumber \\
       \mathrm{undefined}, \hfill &  \text{ if }X_E=Y_E= 0 \hfill \nonumber\\
\end{matrix} \right . \\
\varphi & = &  
\left \{\begin{matrix} 
        \arctan\left(\dfrac{(1-e_e^2)}{(1-e_x^2)}\dfrac{Z_E}{\sqrt{(1-e_e^2)^2X_E^2+Y_E^2}}\right), \hfill & \text{ if } X_E\neq 0 \text{ or } Y_E\neq 0 \hfill \nonumber
\\
        \mathrm{sign}(Z_E)\,\frac{\pi}{2}, \hfill & \text{ if }X_E=Y_E= 0 \hfill \nonumber \\
\end{matrix} \right . \\
\end{eqnarray}

\noindent However, suppose now that we have the cartesian coordinates of a point $P_G$ and we want to compute its geodetic coordinates. In this case, there exists an ellipsoidal height $h$ (see Figure \ref{hi}) such that 
\begin{equation}
\label{PG}
X_G = (\nu+h)\, \cos\varphi \cos\lambda , \quad Y_G = (\nu\,(1-e_e^2)+h)\cos \varphi \sin\lambda , \quad Z_G=(\nu\, (1-e^2_x)+h)\sin\varphi, 
\end{equation}
and the point $P_G$ will have the same latitude and longitude as the intersection point of the ellipsoidal normal vector passing through $P_G$ and the ellipsoid. This point will be named the footpoint of $P_G$. Hence, obtaining the geodetic coordinate $(\varphi,\lambda,h)$ from the Cartesian ones involves first to compute $(X_E,Y_E,Z_E)$, the footpoint of $P_G$, and secondly to apply formulas (\ref{CtoG}). 

The problem of computing the footpoint can be considered as the study of the distance from a point to an ellipsoid, a classical issue in Geometry, and it is tackled for example in \cite{bell20},\cite{hart94} and \cite{Eberly2006DistanceFA} from a less algebraic point of view than ours. Concretely, in \cite{bell20} the formula (\ref{laecu}) appears (on pages 112-113), but with practically no considerations about its resolution. \cite{hart94} is interesting as a basic, seminal approach but it seems that the conclusions are drawn without much mathematical rigor. \cite{Eberly2006DistanceFA} is a much more interesting work, Eberly considered a function defined by formula (\ref{laecu}) in our paper and analitically proved, by a Bolzano type theorem, that it had only one root in certain interval. 

\begin{figure}[H]
\begin{center}
\includegraphics[height=6cm]{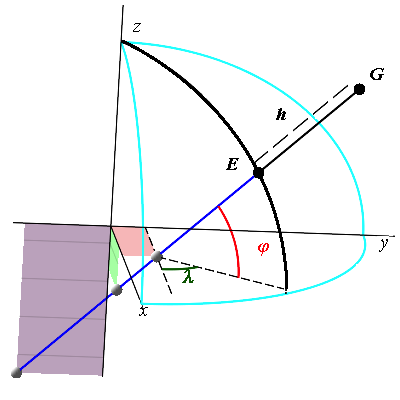}
\caption{Geometry of the problem}
\label{hi}
\end{center}
\end{figure}

\section{Computing the footpoint. First approach}\label{SecBell}

In our computations, we will apply Descartes' rule of signs, which determines the number of positive real roots of a univariate polynomial, and is based on the number of sign changes of its real coefficients.

\begin{theorem}\label{mig}[\cite{mignotte} Descartes' rule] Let $f(X)= a_n X^n+a_{n-1}X^{n-1}+\cdots+a_0$ be a
polynomial in $\R[x]$, where $a_n$ and $a_0$ are nonzero. Let $v$ be the
number of changes of signs in the sequence $[a_{n},\ldots ,a_0]$ of its coefficients
and let $r$ be the number of its real positive roots, counted with their orders
of multiplicity. Then there exists some nonnegative integer $m$ such
that $r = v -2m.$
\end{theorem}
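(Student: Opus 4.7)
The plan is to establish Descartes' rule by induction on the degree $n$, using Rolle's theorem to relate positive roots of $f$ and of $f'$, together with a bookkeeping argument for sign changes. Write $r(f)$ and $v(f)$ for the quantities in the statement. The base case $n=0$ is immediate, since a nonzero constant has $r=0=v$.

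For the inductive step, I would write $f(X)=a_0+a_jX^j+\cdots+a_nX^n$ with $a_0,a_j,a_n$ all nonzero and $j$ minimal, so that $f'(X)=ja_jX^{j-1}+\cdots+na_nX^{n-1}$. The nonzero coefficients of $f'$ are positive multiples of $a_j,\ldots,a_n$, hence their sign pattern coincides with $(a_j,\ldots,a_n)$. Therefore $v(f)=v(f')+\epsilon$, where $\epsilon=1$ if $\mathrm{sign}(a_0)\neq\mathrm{sign}(a_j)$ and $\epsilon=0$ otherwise.

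Next I would compare positive root counts. If $r_1<\cdots<r_s$ are the distinct positive roots of $f$ with multiplicities $m_1,\ldots,m_s$, then $r(f)=\sum m_i$; at each $r_i$, the derivative $f'$ has a root of multiplicity at least $m_i-1$, and Rolle's theorem supplies an additional root in each of the $s-1$ intervals $(r_i,r_{i+1})$, giving $r(f')\geq r(f)-1$. The key refinement is that when $\epsilon=0$, there is an extra positive root of $f'$ in $(0,r_1)$, yielding $r(f')\geq r(f)$. Indeed, $f(x)$ has sign $\mathrm{sign}(a_0)$ on $[0,r_1)$ and vanishes at $r_1$, so $f'$ takes sign $-\mathrm{sign}(a_0)$ immediately to the left of $r_1$; but as $x\to 0^+$, the asymptotic $f'(x)\sim ja_jx^{j-1}$ has sign $\mathrm{sign}(a_j)=\mathrm{sign}(a_0)$, so the intermediate value theorem yields a root of $f'$ in $(0,r_1)$, disjoint from the intervals already counted. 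Combining with the inductive hypothesis $r(f')\leq v(f')$, both cases give
\[
r(f)\;\leq\;r(f')+\epsilon\;\leq\;v(f')+\epsilon\;=\;v(f).
\]

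Finally I would verify the parity $r(f)\equiv v(f)\pmod 2$ by comparing $\mathrm{sign}(a_0)$ and $\mathrm{sign}(a_n)$: the count $v(f)$ is odd iff these signs disagree, and the same holds for $r(f)$ because $f(0)=a_0$, $\lim_{X\to+\infty}f(X)$ has the sign of $a_n$, and the net number of sign changes of $f$ on $(0,\infty)$ has the parity of the number of odd-multiplicity positive roots, which agrees with the parity of $\sum m_i=r(f)$. Together $r(f)\leq v(f)$ and $r(f)\equiv v(f)\pmod 2$ give $r(f)=v(f)-2m$ for some nonnegative integer $m$. I expect the main obstacle to be the Rolle refinement in the case $\epsilon=0$: making sure the extra root produced in $(0,r_1)$ is genuinely distinct from the roots of $f'$ supplied between consecutive $r_i$, and that the multiplicity accounting remains consistent when $f$ or $f'$ has repeated roots at the same points.
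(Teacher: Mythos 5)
This statement is not proved in the paper at all: it is quoted verbatim as a classical result with a citation to Mignotte, and the paper only \emph{applies} it (to the polynomials $A$, $\bar{A}$, $\bar{\Delta}$, $B$, $G_1$). So there is no in-paper argument to compare against; what you have written is the standard textbook proof of Descartes' rule (induction on the degree, Rolle's theorem for the bound $r\leq v$, plus a separate parity argument), and it is essentially correct. Two small points deserve tightening. First, when $j>1$ the derivative $f'$ has zero constant term, so the inductive hypothesis as stated (which requires $a_0\neq 0$) does not apply to $f'$ directly; you should apply it to $f'(X)/X^{j-1}$, which has the same positive roots, the same sign-change count, nonzero constant term $ja_j$, and strictly smaller degree. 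Second, the assertion that ``$f'$ takes sign $-\mathrm{sign}(a_0)$ immediately to the left of $r_1$'' is not literally true pointwise, since $f$ need not be monotone near $r_1$; the clean fix is the mean value theorem on $[x,r_1]$, which gives $f'(\xi)=-f(x)/(r_1-x)$ with sign $-\mathrm{sign}(a_0)$ at some $\xi$ arbitrarily close to $r_1$, after which your intermediate value argument produces the extra root of $f'$ in $(0,r_1)$, distinct from the Rolle roots and from the roots at the $r_i$. With these repairs the multiplicity bookkeeping $r(f')\geq r(f)-1+[\epsilon=0]$, the inequality $r(f)\leq v(f)$, and the parity comparison via $\mathrm{sign}(a_0)$ versus $\mathrm{sign}(a_n)$ all go through as you describe.
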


\noindent We will apply Descartes' rule several times across the paper, for polynomials whose number of sign changes in its lists of coefficients is equal to 0 or 1, therefore they have no or one positive real root, respectively. Analyzing the sign of the coefficients of these polynomials will be reduced to studying the relative positions of several ellipsoids. These ellipsoids have the same center and each ellipsoid will turn out to be placed inside or outside the others, having no intersection points. 

The unique positive real roots of these polynomials will be used to determine the footpoint of a given point (see Equations (\ref{XEfromXG}) and (\ref{XEfromGb})). 

We assume throughout the paper, for simplicity, that our point $P_G\neq (0,0,0)$ is situated in the first octant and also that $a_x>a_y>a_z$. We define $P=(a_x-a_z)(a_x+a_z)>0 ,  Q=(a_y-a_z)(a_y+a_z)>0  \text{ and } R=(a_x-a_y)(a_x+a_y)>0$. 

\noindent Following \cite{bell20}, \cite{feltens09} and \cite{ligas}, the gradient of $f(X,Y,Z)$ evaluated in the footpoint $P_E$ provides a normal vector to the ellipsoid, $\vec{n}=2\left( \dfrac{X_E}{a_x^2},\dfrac{Y_E}{a_y^2},\dfrac{Z_E}{a_z^2}\right),$ and a vector connecting point $P_G$ and $P_E$ is $$\vec{h}=(X_G-X_E,Y_G-Y_E,Z_G-Z_E)=h( \cos\varphi \cos\lambda,\cos \varphi \sin\lambda,\sin\varphi) $$ with $P_G= \vec{h} +  P_E$. Both vectors $ {\vec{h}}$ and $ {\vec{n}}$ must be proportional and so, in the general case $|h|>0$, there is a real value $t$ with $$\label{EcT}
t= \dfrac{X_G-X_E}{X_E/a_x^2}=\dfrac{Y_G-Y_E}{Y_E/a_y^2}=\dfrac{Z_G-Z_E}{Z_E/a_z^2},
$$ 
and thus 
\begin{equation}\label{XEfromXG}
X_E = \frac{a_x^2\, X_G }{t + a_x^2},\,Y_E = \frac{a_y^2\, Y_G }{t + a_y^2},\,Z_E = \frac{a_z^2\, Z_G }{t + a_z^2} 
\end{equation}
Since $\dfrac{X_E^2}{a_x^2}+ \dfrac{Y_E^2}{a_y^2}+ \dfrac{Z_E^2}{a_z^2}=1,$ 
we have 
\begin{equation}
\label{laecu}
 \dfrac{(a_x \, X_G)^2 }{(t + a_x^2)^2 }+\dfrac{(a_y \, Y_G)^2 }{(t + a_y^2)^2}+\dfrac{(a_z \, Z_G)^2 }{(t + a_z^2)^2 }-1=0.
\end{equation}
The numerator of Equation (\ref{laecu}) is the polynomial $A(t)= t^6+A_5t^5+A_4t^4+A_3t^3+A_2t^2+A_1t+A_0$, where
\begin{eqnarray*}
A_5 &=&2\,(a_x^2 + a_y^2+a_z^2)>0,\\  
A_4&=&-a_x^2X_G^2-a_y^2Y_G^2-a_z^2Z_G^2+(a_x^2+a_y^2+a_z^2)^2+2(a_x^2a_y^2+a_x^2a_z^2	+a_y^2a_z^2),\\ 
A_3&=&-2\left(\, a_x^2(a_y^2+a_z^2)X_G^2+a_y^2(a_x^2+a_z^2)Y_G^2+a_z^2(a_x^2+a_y^2)Z_G^2- \right.\\
&&  \left.- (a_x^2+a_y^2+a_z^2)(a_y^2a_z^2+a_x^2a_y^2+a_x^2a_z^2) - a_x^2a_y^2a_z^2 \,\right),
\\  
A_2&=&-a_x^2(a_y^4+4a_y^2a_z^2+a_z^4)X_G^2-a_y^2(a_x^4+4a_x^2a_z^2+a_z^4)Y_G^2-a_z^2(a_x^4+4a_x^2a_y^2+a_y^4)Z_G^2+\\
&&+ (a_x^2a_y^2+a_x^2a_z^2+a_z^2a_y^2)^2 + 2a_x^2a_y^2a_z^2(a_x^2+a_y^2+a_z^2),
\\ 
A_1&=&-2a_x^2a_y^2a_z^2\left(\,(a_y^2+a_z^2)X_G^2+(a_x^2+a_z^2)Y_G^2+(a_x^2+a_y^2)Z_G^2-a_x^2a_y^2-a_x^2a_z^2-a_y^2a_z^2 \,\right),\\  
A_0&=& -a_x^2a_y^2a_z^2\left(\,a_x^2a_y^2Z_G^2+a_x^2a_z^2Y_G^2+a_y^2a_z^2X_G^2-a_x^2a_y^2a_z^2\,\right).\\  
\end{eqnarray*}
The variable $t$ can be considered as a parametric coordinate of $P_G$ and is positive if the point is situated outside the reference ellipsoid, negative if it is situated inside or 0 if it is situated on the reference ellipsoid. Obviously, the ellipsoidal heigh $h$ is equal to 0 iff $A_0=0$.

\begin{remark}\label{casoBiaxial}
In the particular case of a biaxial reference ellipsoid, when $a_x=a_y$, the Equation (\ref{laecu}) becomes
\begin{equation}
\dfrac{(a_x \, X_G)^2 + (a_x \, Y_G)^2}{(t + a_x^2)^2 }+\dfrac{(a_z \, Z_G)^2 }{(t + a_z^2)^2 }-1=0
\end{equation}
and leads to the fourth degree polynomial $\alpha(t)=t^4 + \alpha_3 t^3 + \alpha_2 t^2 + \alpha_1 t + \alpha_0$ where 
\begin{eqnarray*}
\alpha_3 &=&2\left({{\it a_x}}^2+{{\it a_z}}^2\right),\\
\alpha_2 &=& -{{\it a_x}}^{2}({{\it X_G}}^{2}+{{\it Y_G}}^{2})-{{\it a_z}}^{2}{{\it Z_G}}^
{2}+({{\it a_x}}^{2}+{{\it a_z}}^{2})^2+2\,{{\it a_x}}^{2}{{\it a_z}}^{2},\\
\alpha_1 &=&-2\,{{\it a_x}}^{2}{{\it a_z}}^{2} \left( {{\it X_G}}^{2}+{{\it Y_G}}^{2}+
{{\it Z_G}}^{2}-{{\it a_x}}^{2}-{{\it a_z}}^{2} \right), \\
\alpha_0 &=& -{{\it a_x}}^{2}{{\it a_z}}^{2} \left( {{\it a_z}}^{2}{{\it X_G}}^{2}+{{\it a_z}}^{2}{{
\it Y_G}}^{2}+{{\it a_x}}^{2}{{\it Z_G}}^{2}-{{\it a_x}}^{2}{{\it a_z}}^{2} \right).\\
\end{eqnarray*}
The results obtained in this paper can be established also for the biaxial case. Nevertheless, we do not consider of any relevance this particular case: the aforementioned fourth degree polynomial has been studied in \cite{laloirene09} completely symbolically, by using Sturm--Habicht coefficients and subresultants, having led to a close form solution. 
\end{remark}



\begin{proposition}\label{propSignoBell}
The number of sign changes in $[A_5,A_4,A_3,A_2,A_1,A_0]$ is equal to 1 if the point $P_G$ is situated outside the reference ellipsoid, or 0 if the point $P_G$ is situated inside or on the reference ellipsoid.
\end{proposition}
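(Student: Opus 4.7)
The plan is to view each coefficient $A_i$, for $i\in\{0,1,2,3,4\}$, as an indicator of whether $P_G$ lies inside, on, or outside a certain ellipsoid concentric with the reference ellipsoid, and then to exploit a nesting of these five ellipsoids.

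Since the leading coefficient of $A(t)$ is $1>0$ and $A_5>0$, the number of sign changes in the full coefficient list $[1,A_5,A_4,A_3,A_2,A_1,A_0]$ equals the number of sign changes in $[A_5,A_4,A_3,A_2,A_1,A_0]$, whose leading entry is positive. Inspection of the explicit formulas shows that every $A_i$ for $i\in\{0,1,2,3,4\}$ can be written in the form
\[
A_i \;=\; C_i \;-\; \bigl(\alpha_i X_G^2 + \beta_i Y_G^2 + \gamma_i Z_G^2\bigr),
\]
with $C_i>0$ and $\alpha_i,\beta_i,\gamma_i>0$ explicit polynomials in $a_x^2,a_y^2,a_z^2$. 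Hence the locus $A_i\geq 0$ is the closed interior of an ellipsoid $\mathcal{E}_i$ centered at the origin, with semi-axes squared $C_i/\alpha_i$, $C_i/\beta_i$, $C_i/\gamma_i$; in particular $\mathcal{E}_0$ is exactly the reference ellipsoid.

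The crucial geometric step is to establish the strict nesting
\[
\mathcal{E}_0 \;\subset\; \mathcal{E}_1 \;\subset\; \mathcal{E}_2 \;\subset\; \mathcal{E}_3 \;\subset\; \mathcal{E}_4.
\]
Because all five ellipsoids are concentric and axis-aligned, this reduces to showing that, along each coordinate axis, the semi-axis of $\mathcal{E}_i$ is strictly smaller than that of $\mathcal{E}_{i+1}$, for $i=0,1,2,3$. The first comparison is transparent: the $x$-semi-axis squared of $\mathcal{E}_1$ equals $a_x^2 + \dfrac{a_y^2 a_z^2}{a_y^2+a_z^2} > a_x^2$, and analogously for the $y$ and $z$ axes. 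The remaining comparisons reduce to polynomial inequalities in $a_x^2,a_y^2,a_z^2$, which can be verified algebraically from $a_x>a_y>a_z>0$; these reflect a Newton-type log-concavity of the coefficients of $(t+a_y^2)^2(t+a_z^2)^2$ (and analogously for the other two directions), since these are the polynomials whose coefficients, divided by the appropriate power of $a_x^2$, produce the ratios $C_i/\alpha_i$. I expect making this chain of axis comparisons clean and uniformly presented to be the main technical effort.

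Granted the nesting, the conclusion is immediate. If $P_G$ is inside or on $\mathcal{E}_0$, then by nesting $P_G$ is inside or on every $\mathcal{E}_i$, so each $A_i\geq 0$; combined with $A_5>0$, the sequence $[A_5,A_4,\ldots,A_0]$ has no sign changes. If $P_G$ lies strictly outside $\mathcal{E}_0$, let $k\in\{0,1,2,3,4\}$ be the largest index for which $P_G$ lies outside $\mathcal{E}_k$ (so $A_k<0$). By nesting, $P_G$ is outside $\mathcal{E}_j$ for every $j\leq k$ and inside or on $\mathcal{E}_j$ for every $j>k$. Therefore $A_j<0$ for $0\leq j\leq k$ and $A_j\geq 0$ for $k<j\leq 4$, so reading $[A_5,A_4,\ldots,A_0]$ from left to right one sees non-negative entries followed by negative entries, producing exactly one sign change.
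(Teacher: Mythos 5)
Your proposal is correct and follows essentially the same route as the paper: each coefficient $A_4,\dots,A_0$ is interpreted as an inside/on/outside indicator for a concentric axis-aligned ellipsoid, the nesting $\mathcal{E}_0\subset\mathcal{E}_1\subset\mathcal{E}_2\subset\mathcal{E}_3\subset\mathcal{E}_4$ is obtained by comparing semi-axes, and the sign pattern of $[A_5,\dots,A_0]$ then gives exactly one change for an exterior point and none otherwise. The only differences are presentational: you replace the paper's explicit enumeration of the possible sign patterns by the ``largest index outside'' argument, and, like the paper (which only verifies the inclusion $e_{original}\subset e_1$ explicitly), you assert rather than carry out the remaining semi-axis comparisons, although your log-concavity observation for the coefficients of $(t+a_y^2)^2(t+a_z^2)^2$ is indeed a valid uniform way to establish them.
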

\begin{proof}
The sign of $A_0$ depends on the sign of the factor 
$$a_x^2a_y^2Z_G^2+a_x^2a_z^2Y_G^2+a_y^2a_z^2X_G^2-a_x^2a_y^2a_z^2,
$$ 
which is the numerator of $f(X_G,Y_G,Z_G)-1$. 
The sign of $A_1$ depends on the sign of the factor
$$
 (a_y^2+a_z^2)X_G^2+(a_x^2+a_z^2)Y_G^2+(a_x^2+a_y^2)Z_G^2-a_x^2a_y^2-a_x^2a_z^2-a_y^2a_z^2,  
 $$
which defines the ellipsoid of equation
\begin{eqnarray*}\label{c1_elip}
e_{1}: & X^2\dfrac{   a_y^2+a_z^2   }{a_x^2  a_y^2 +  a_x^2 a_z^2+  a_y^2 a_z^2} +Y^2\dfrac{  a_x^2+a_z^2  }{a_x^2  a_y^2 +  a_x^2 a_z^2+  a_y^2 a_z^2}  +Z^2\dfrac{ a_x^2+a_y^2}{a_x^2  a_y^2 +  a_x^2 a_z^2+  a_y^2 a_z^2} = 1. \hfill
\end{eqnarray*}
Since 
$$\dfrac{a_x^2  a_y^2 +  a_x^2 a_z^2+  a_y^2 a_z^2}{   a_y^2+a_z^2   }>a_x^2, \ \ \dfrac{a_x^2  a_y^2 +  a_x^2 a_z^2+  a_y^2 a_z^2}{ a_x^2+a_z^2   }>a_y^2, \ \ 
\dfrac{a_x^2  a_y^2 +  a_x^2 a_z^2+  a_y^2 a_z^2}{   a_x^2+a_y^2   }>a_z^2,$$
the original, reference ellipsoid $e_{original}$ is situated inside the ellipsoid $e_{1}$.

\noindent The coefficient $A_2$ defines the ellipsoid of equation
\begin{eqnarray*}\label{c2_elip}
e_{2}:& X^2 \dfrac{a_x^2(a_y^4+4a_y^2a_z^2+a_z^4)}{(a_x^2a_y^2+a_x^2a_z^2+a_z^2a_y^2)^2 + 2a_x^2a_y^2a_z^2(a_x^2+a_y^2+a_z^2)} +\hfill &\\
& +Y^2\dfrac{a_y^2(a_x^4+4a_x^2a_z^2+a_z^4)}{(a_x^2a_y^2+a_x^2a_z^2+a_z^2a_y^2)^2 + 2a_x^2a_y^2a_z^2(a_x^2+a_y^2+a_z^2)}+ \hfill & \\ 
& +Z^2\dfrac{a_z^2(a_x^4+4a_x^2a_y^2+a_y^4)}{(a_x^2a_y^2+a_x^2a_z^2+a_z^2a_y^2)^2 + 2a_x^2a_y^2a_z^2(a_x^2+a_y^2+a_z^2)}\hfill &=1.\hfill
\end{eqnarray*} 
The semiaxes of the ellipsoid $e_{2}$ are bigger than the corresponding semiaxes of the ellipsoid $e_{1}$, and in consequence $$ e_{original} \subset e_{1}  \subset e_{2}.$$

\noindent The sign of the coefficient $A_3$ depends on a negative factor and on the factor $$ a_x^2(a_y^2+a_z^2)X_G^2+a_y^2(a_x^2+a_z^2)Y_G^2+a_z^2(a_x^2+a_y^2)Z_G^2- (a_x^2+a_y^2+a_z^2)(a_y^2a_z^2+a_x^2a_y^2+a_x^2a_z^2) - a_x^2a_y^2a_z^2.$$ This factor defines the ellipsoid of equation
\begin{eqnarray*}\label{c3_elip}
e_{3}: & X^2\dfrac{a_x^2(a_y^2+a_z^2)}{(a_x^2+a_y^2+a_z^2)(a_y^2a_z^2+a_x^2a_y^2+a_x^2a_z^2) + a_x^2a_y^2a_z^2}+\hfill &  \\ & +Y^2\dfrac{a_y^2(a_x^2+a_z^2)}{(a_x^2+a_y^2+a_z^2)(a_y^2a_z^2+a_x^2a_y^2+a_x^2a_z^2) + a_x^2a_y^2a_z^2}+\hfill & \\ & +Z^2\dfrac{a_z^2(a_x^2+a_y^2)}{(a_x^2+a_y^2+a_z^2)(a_y^2a_z^2+a_x^2a_y^2+a_x^2a_z^2) + a_x^2a_y^2a_z^2}\hfill &=1.\hfill 
\end{eqnarray*}
The semiaxes of the ellipsoid $e_{3}$ are also bigger than the corresponding semiaxes of the ellipsoid $e_{2}$, and in consequence $$ e_{original} \subset e_{1}  \subset e_{2} \subset e_{3}.$$

\noindent Finally, the coefficient $A_4$ defines the ellipsoid of equation
\begin{eqnarray*}\label{c4_elip}
e_{4}: & X^2\dfrac{a_x^2}{(a_x^2+a_y^2+a_z^2)^2+2(a_x^2a_y^2+a_x^2a_z^2	+a_y^2a_z^2)}+ \hfill & \\ & + Y^2\dfrac{a_y^2}{(a_x^2+a_y^2+a_z^2)^2+2(a_x^2a_y^2+a_x^2a_z^+a_y^2a_z^2)}+ \hfill & \\ & +Z^2\dfrac{a_z^2}{(a_x^2+a_y^2+a_z^2)^2+2(a_x^2a_y^2+a_x^2a_z^2+a_y^2a_z^2)} \hfill &= 1 \hfill  
\end{eqnarray*}
The semiaxes of the ellipsoid $e_{4}$ are also bigger than the corresponding semiaxes of the ellipsoid $e_{3}$, and in consequence $$ e_{original} \subset e_{1}  \subset e_{2} \subset e_{3}\subset e_{4}.$$

\noindent Therefore, the signs of the list $[A_5,A_4,A_3,A_2,A_1,A_0]$ must be one of the following (being the number of sign changes equal to 1 for an outside point $P_G$ and 0 otherwise):
\begin{itemize}
\item $[+,+,+,+,+,+]$ if $P_G$ is inside the reference ellipsoid,
\item $[+,+,+,+,+,0]$ if  $P_G$ is on the reference ellipsoid,
\item $[+,+,+,+,+,-]$ if  $P_G$ is outside the reference ellipsoid and inside  $e_1$,
\item $[+,+,+,+,0,-]$ if $P_G$ is on $e_1$, 
\item $[+,+,+,+ ,-,-]$ if $P_G$ is  outside $e_1$ and inside $e_2$,
\item $[+,+,+,0,-,-]$ if $P_G$ is on $e_2$,
\item $[+,+,+,- ,-,-]$  if $P_G$ is outside $e_2$ and inside $e_3$,
\item $[+,+,0,-,-,-]$ if $P_G$ is on $e_3$,
\item $[+,+,-,- ,-,-]$  if $P_G$ is outside $e_3$ and inside $e_4$,
\item $[+,0,-,- ,-,-]$ if $P_G$ is on $e_4$,
\item $[+,-,-,- ,-,-]$ if $P_G$ is outside $e_4$.
\end{itemize}
\end{proof}

Consequently if $P_G$ is outside the reference ellipsoid, then the polynomial $A(t)$ has a unique real positive root. If $P_G$ is inside the reference ellipsoid, then the polynomial $A(t)$ has no positive real roots. If $P_G$ is on the reference ellipsoid, then it has no positive real roots and furthermore $A(0)=0$. 

%

\subsection{$P_G$ situated inside the ellipsoid}
We will analyze in the following the case of $P_G$ being situated inside the ellipsoid. Suppose first that $Z_G>0$. Then $Z_E>0$ and because of (\ref{XEfromXG}), we should have $t>-a_z^2$. Therefore, there exists $k>0$ with $t=-a_z^2+k.$ That leads us to consider the polynomial $\bar{A}(k)=A(-a_z^2+k)$, whose number of positive real roots is equal to the number of real (negative, since $A(t)$ has no positive real roots in this case) roots of $A(t)$ satisfying $t>-a_z^2$. 

\noindent By applying Descartes' rule, we will see that $\bar{A}(k)$ has only one positive root. We obtain that $\bar{A}(k)= k^6+\bar{A}_5 k^5+\bar{A}_4 k^4+\bar{A}_3 k^3+\bar{A}_2 k^2+\bar{A}_1 k+\bar{A}_0$, where 
\begin{eqnarray*}
\bar{A}_5 &=& 2(P+Q) >0,\\  
\bar{A}_4&=& - a_x^2X_G^2 - a_y^2Y_G^2 - a_z^2Z_G^2 + P^2+Q^2+4PQ,\\  
\bar{A}_3&=& 2\left( -a_x^2 Q X_G^2 - a_y^2 P Y_G^2 - a_z^2 (P+Q) Z_G^2 + PQ(P+Q)  \right),
\\  
\bar{A}_2&=& -a_x^2Q^2 X_G^2 - a_y^2 P^2 Y_G^2 -a_z^2(P^2+Q^2+4PQ) Z_G^2 + P^2Q^2,
\\  
\bar{A}_1&=& -2a_z^2PQ(P+Q) Z_G^2\leq 0\,,
\\   
\bar{A}_0&=& -a_z^2P^2 Q^2 Z_G^2\leq 0\,.  
\end{eqnarray*}

\begin{proposition}\label{propSignoBell1}
 If $Z_G>0$, the number of sign changes in the list $[\bar{A}_5,\bar{A}_4,\bar{A}_3,\bar{A}_2,\bar{A}_1,\bar{A}_0]$ is equal to 1.
\end{proposition}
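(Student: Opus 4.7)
The plan is to reprise the strategy of Proposition \ref{propSignoBell}. First, I would pin down the extreme coefficients of the list: $\bar{A}_5 = 2(P+Q) > 0$ since $P, Q > 0$, while $\bar{A}_1 = -2a_z^2 PQ(P+Q) Z_G^2$ and $\bar{A}_0 = -a_z^2 P^2 Q^2 Z_G^2$ are strictly negative under the hypothesis $Z_G > 0$. Hence $[\bar{A}_5,\bar{A}_4,\bar{A}_3,\bar{A}_2,\bar{A}_1,\bar{A}_0]$ begins with $+$ and ends with two $-$'s, which already forces at least one sign change; the real work is to rule out three changes by analysing the middle coefficients $\bar{A}_4,\bar{A}_3,\bar{A}_2$.

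To that end, I would interpret each of $\bar{A}_4,\bar{A}_3,\bar{A}_2$ as $-1$ times the defining polynomial of a centered, axis-aligned ellipsoid $\bar{e}_4,\bar{e}_3,\bar{e}_2$, so that $\bar{A}_i>0$ iff $P_G$ lies strictly inside $\bar{e}_i$. Reading off the coefficients gives semi-axes $(\sqrt{P^2+Q^2+4PQ}/a_x,\sqrt{P^2+Q^2+4PQ}/a_y,\sqrt{P^2+Q^2+4PQ}/a_z)$ for $\bar{e}_4$, semi-axes $(\sqrt{P(P+Q)}/a_x,\sqrt{Q(P+Q)}/a_y,\sqrt{PQ}/a_z)$ for $\bar{e}_3$, and semi-axes $(P/a_x,\,Q/a_y,\,PQ/(a_z\sqrt{P^2+Q^2+4PQ}))$ for $\bar{e}_2$.

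The key step, echoing the chain $e_{original}\subset e_1\subset\cdots$ from Proposition \ref{propSignoBell}, is to prove $\bar{e}_2\subset\bar{e}_3\subset\bar{e}_4$. Because the three ellipsoids share center and axes, this reduces to pairwise semi-axis comparisons in each coordinate direction, and each comparison collapses to an elementary polynomial inequality in $P,Q>0$ (e.g.\ $P^2<P(P+Q)$, $P(P+Q)<P^2+Q^2+4PQ$, and $P^2Q^2<PQ(P^2+Q^2+4PQ)$). Once the nesting is in place, the only sign profiles that $(\bar{A}_4,\bar{A}_3,\bar{A}_2)$ can exhibit are $(+,+,+)$, $(+,+,-)$, $(+,-,-)$, $(-,-,-)$, together with boundary variants in which some entries vanish; concatenating any of these with $[+,\cdot,\cdot,\cdot,-,-]$ yields a list with exactly one sign change.

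I do not expect a genuine conceptual obstacle: the argument is a direct analogue of Proposition \ref{propSignoBell}, and the slightly heavier bookkeeping comes from expressing the $\bar{e}_i$ in terms of the auxiliary quantities $P$ and $Q$ rather than directly in $a_x^2,a_y^2,a_z^2$. The one mild subtlety is handling the boundary cases where $P_G$ lies exactly on some $\bar{e}_i$ (so $\bar{A}_i=0$), but the same convention already used in Proposition \ref{propSignoBell}, whereby a single zero flanked by like signs contributes no sign change and a zero between opposite signs contributes one, disposes of these cases without any further calculation.
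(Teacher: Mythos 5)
Your proposal is correct and follows essentially the same route as the paper: you identify $\bar{A}_2,\bar{A}_3,\bar{A}_4$ with the nested ellipsoids $\bar{e}_2\subset\bar{e}_3\subset\bar{e}_4$ (your semi-axes match the paper's equations), verify the nesting by the same elementary inequalities in $P,Q>0$, and conclude by the same enumeration of sign profiles sandwiched between $\bar{A}_5>0$ and $\bar{A}_1,\bar{A}_0<0$. No gaps; your handling of the boundary (zero-coefficient) cases agrees with the paper's listing.
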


\begin{proof}
The coefficient $\bar{A}_2$ defines the ellipsoid $\bar{e}_2$,
$$\label{c2_elip1}
\bar{e}_2: \, X^2 \,\frac{a_x^2}{P^2} + Y^2\, \frac{a_y^2}{Q^2} + Z^2\, \frac{a_z^2(P^2+Q^2+4PQ)}{P^2Q^2}=1.
$$
The coefficient $\bar{A}_3$ defines the ellipsoid of equation
$$\label{c3_elip1}
\bar{e}_{3}: X^2 \frac{a_x^2}{P(P+Q)} + Y^2 \frac{a_y^2}{Q(P+Q)} + Z^2 \frac{a_z^2}{PQ}=1.
$$
The coefficient $\bar{A}_4$ defines the ellipsoid of equation 
$$\label{c4_elip1}
\bar{e}_4: X^2 \frac{a_x^2}{P^2+Q^2+4PQ} + Y^2 \frac{a_y^2}{P^2+Q^2+4PQ} + Z^2 \frac{a_z^2}{P^2+Q^2+4PQ}=1.$$
Since $$P^2<P(P+Q)<P^2+Q^2+4\,PQ , \quad Q^2<Q(P+Q)<P^2+Q^2+4\,PQ, \quad  \frac{P^2Q^2}{P^2 +Q^2  +4\,PQ }<PQ<P^2+Q^2+4\,PQ,$$ we have $\bar{e}_{2} \subset \bar{e}_{3} \subset \bar{e}_{4}$. Therefore, the signs of the list $[\bar{A}_5, \bar{A}_4, \bar{A}_3, \bar{A}_2, \bar{A}_1, \bar{A}_0]$ must be one of the following: 
\begin{itemize}
\item $[+,+,+,+,-,-]$ if the point $P_G$ is inside $\bar{e}_{2}$,
\item $[+,+,+,0,-,-]$ if the point $P_G$ is on $\bar{e}_{2}$,
\item $[+,+,+,-,-,-]$ if the point $P_G$ is outside $\bar{e}_{2}$ and inside $\bar{e}_{3}$,
\item $[+,+,0,-,-,-]$ if the point $P_G$ is on $\bar{e}_{3}$,
\item $[+,+,-,-,-,-]$ if the point $P_G$ is outside $\bar{e}_{3}$ and inside $\bar{e}_{4}$,
\item $[+,0,-,-,-,-]$ if the point $P_G$ is on $\bar{e}_{4}$,
\item $[+,-,-,-,-,-]$ if the point $P_G$ is outside $\bar{e}_{4}$.
\end{itemize}

\end{proof}
Consequently if $P_G$ is situated inside the reference ellipsoid with $Z_G>0$ then the polynomial $A(t)$ has a unique real root satisfying $-a_z^2<t<0$.

%
%
 
Suppose now that $Z_G=0$. Then, $\varphi=0$ and the footpoint $P_E$ is on the ellipse 
\begin{equation}\label{elipse}
\dfrac{X^2}{a_x^2}+ \dfrac{Y^2}{a_y^2}=1.
\end{equation}

\noindent Observe that if $Y_G=0$, then $\lambda=0$ and if $X_G=0$ then $\lambda=\dfrac{\pi}{2}$. Suppose that $X_G>0$ and $Y_G>0$. Thus, following the same reasoning as before, we will have $$\frac{(a_x \, X_G)^2 }{(t + a_x^2)^2 }+\frac{(a_y \, Y_G)^2 }{(t + a_y^2)^2}-1=0,
$$
with the numerator equal to $\Delta(t)={t}^{4}+\Delta_3t^3+\Delta_2t^2+\Delta_1t +\Delta_0$, where 
 \begin{eqnarray*}
  \Delta_3&=& 2\left(  a_x^{2}+  a_y^{2} \right)>0 ,\\
 \Delta_2&=& \left( a_x^{4}+4\, a_x^2 a_y^{2}+ a_y^{4} -a_x^{2}X_G^{2}- a_y^{2} Y_G^{2} \right) , \\
 \Delta_1&=&2\,a_x^{2} a_y^{2} \left( a_x^{2}+a_y^{2}- X_G^{2}- Y_G^{2} \right),\\
 \Delta_0&=& a_x^{2} a_y^{2} \left(  a_x^{2} a_y^{2}- a_x^{2} Y_G^{2}-
a_y^{2} X_G^{2} \right) .
\end{eqnarray*}

\noindent In this case, $\Delta_0$ is zero iff the point $P_G$ is situated on the ellipse $(\ref{elipse})$, and the number of sign changes in the list $[\Delta_3,\Delta_2,\Delta_1,\Delta_0]$ is zero for a point $P_G$ inside or on the ellipse $(\ref{elipse})$. However, by the same reasoning as before, $t$ must be bigger than $-a_y^2$ and if we substitute $k-a_y^2$ for $t$ in $\Delta(t)$, we obtain 
$$\bar{\Delta}(k)={k}^{4}+\bar{\Delta}_3k^3+\bar{\Delta}_2k^2+\bar{\Delta}_1k +\bar{\Delta}_0,$$
with 
\begin{equation}\label{deltas}
\bar{\Delta}_3= 2R>0 , \quad \bar{\Delta}_2=R^2 -a_x^2 X_G^2  -a_y^2 Y_G^2  , \quad \bar{\Delta}_1= -2 a_y^2 Y_G^2 R<0 , \quad  \bar{\Delta}_0= -a_y^2Y_G^2 R^2<0,
\end{equation} 
therefore the number of sign changes in the list $[  \bar{\Delta}_3, \bar{\Delta}_2, \bar{\Delta}_1, \bar{\Delta}_0]$ is equal to 1.

Consequently if $P_G$ is situated inside the reference ellipsoid with $Z_G=0$, $X_G>0$ and $Y_G>0$, then the polynomial $\Delta(t)$ has a unique real root satisfying $-a_y^2<t<0$.

\subsection{The algorithm}
All these results lead to the following algorithm.
%
%


\begin{algorithm}[H]
\begin{algorithmic}[1]
\REQUIRE{The semiaxes of the triaxial reference ellipsoid. \\ \quad \quad \quad The Cartesian coordinates $(X_G,Y_G,Z_G)\neq (0,0,0)$. }
\ENSURE The geodetic coordinates $(\varphi,\lambda,h)$.
\IF{$f(X_G,Y_G,Z_G)=1$} \STATE {  $(X_G,Y_G,Z_G)=(X_E,Y_E,Z_E)$, $(\varphi,\lambda)$ are computed from Equalities (\ref{CtoG}) and $h=0$;
} 
 \ELSE 
\IF{$f(X_G,Y_G,Z_G)>1$} 
 
\STATE{ evaluate coefficients $A_i$, $i=0,\ldots,5$;}\quad\COMMENT{see Proposition \ref{propSignoBell}}
\STATE{ compute $\textsc{t}$ the unique positive root of $A(t)$;} 
\STATE{  substitute $t=\textsc{t}$ in Equalities (\ref{XEfromXG}) for computing $(X_E,Y_E,Z_E)$;} 
\STATE{$h=|(X_G,Y_G,Z_G)-(X_E,Y_E,Z_E)|$    }
\ELSE
\IF{$Z_G>0$}
\STATE{ evaluate coefficients $\bar{A}_i$, $i=0,\ldots,5$;}\quad\COMMENT{see Proposition \ref{propSignoBell1}}
\STATE{ compute $\textsc{k}$ the unique positive root of $\bar{A}(k)$;}
\STATE{  substitute $t=-a_z^2+\textsc{k}$ in Equalities (\ref{XEfromXG}) for computing $(X_E,Y_E,Z_E)$;} 
\STATE{$h=-|(X_G,Y_G,Z_G)-(X_E,Y_E,Z_E)|$;    }
\STATE{compute $(\varphi,\lambda)$ from Equalities (\ref{CtoG})  }
\ELSE
\STATE{$Z_E=0$; $\varphi=0$;}
\IF {$X_G>0$, $Y_G>0$}
\STATE { evaluate coefficients $\bar{\Delta}_i$, $i=0,\ldots,3$;}\quad\COMMENT{see Equations (\ref{deltas})}
\STATE{ compute $\textsc{k}$ the unique positive root of $\bar{\Delta}(k)$;}
\STATE{  substitute $t=-a_y^2+\textsc{k}$ in Equalities (\ref{XEfromXG}) for computing $X_E$ and $Y_E$;} 
\STATE{$h=-|(X_G,Y_G)-(X_E,Y_E)|$;    }
\STATE{compute $\lambda$ from Equalities (\ref{CtoG})  }
\ENDIF
\IF {$X_G=0$} 
\STATE {$X_E=0$; $Y_E=a_y$; $\lambda=\displaystyle{\frac{\pi}{2}}$; $h=Y_G-Y_E$}
\ENDIF
\IF {$Y_G=0$}
\STATE {$X_E=a_x$; $Y_E=0$; $\lambda=0$; $h=X_G-X_E$}
\ENDIF
\ENDIF
\ENDIF
\
\ENDIF 
\end{algorithmic}
\caption*{{\bf Algorithm} {\tt Cartesian into Geodetic I}}
\end{algorithm}


\section{Computing the footpoint. Second approach}\label{SecGB}

The ideal generated by a family of polynomials is defined to be the set of linear combinations, with polynomial coefficients, of these polynomials (see \cite{cox07} pg.30 for details). If we have a system of equations with finitely many solutions, it is well known that a Gr\"obner basis (see \cite{awange18} and \cite{cox07} for details) of the ideal generated by the equations of such a system provides another equivalent system but in triangular form, which is much easier to solve. We will explore this idea in this section.

According to Section \ref{SecBell}, the cartesian coordinates of the footpoint must satisfy the system of equations in three unknowns given by: 
$$
\displaystyle{\frac{x^2}{a_x^2}}+ \displaystyle{\frac{y^2}{a_y^2}}+ \displaystyle{\frac{z^2}{a_z^2}}=1, \quad
\dfrac{X_G-x}{x/a_x^2}-\dfrac{Y_G-y}{y/a_y^2}=0,  \quad \dfrac{X_G-x}{x/a_x^2}-\dfrac{Z_G-z}{z/a_z^2}=0,  \quad \dfrac{Y_G-y}{y/a_y^2}-\dfrac{Z_G-z}{z/a_z^2}=0. 
$$
By assuming first that none of three variables is zero, this system is equivalent to the following one:
$$
S:  \left\{\begin{array}{ccc} \medskip
   a_y^2a_z^2x^2  +a_x^2a_z^2y^2+ a_x^2a_y^2z^2 - a_x^2a_y^2a_z^2&=&0,\\ \medskip
  a_x^2xy - a_x^2X_Gy - a_y^2xy + a_y^2Y_Gx&=&0,\\ \medskip
  a_x^2xz - a_x^2X_Gz - a_z^2xz + a_z^2Z_Gx&=&0 ,\\ \medskip
  a_z^2 y  z+a_y^2 Y_{G} z - a_z^2 Z_{G} y -  a_y^2 y z&=&0.\\ 
\end{array}\right.
$$
The system $S$ has finitely many solutions, and so, as mentioned previously, a Gr\"obner basis of the ideal generated by the equations of $S$ provides another equivalent system but in triangular form in the variables $x,y,z$. The univariate equation in $z$ in the Gr\"obner basis\footnote{The Gr\"obner basis using the lexicographical order with $y>x>z$ (see \cite{cox07} pg.56 for details), computed with {\tt Maple 2017} is available at \url{http://dx.doi.org/10.17632/xw5ws5gz8x.1}.} is given by $\label{Bz} B(z)= B_6 z^6 + B_5 z^5 + B_4 z^4 + B_3 z^3 + B_2 z^2 + B_1 z+B_0,$ where 
\begin{eqnarray*}
B_6 &=&P^2 Q^2>0,\\ 
B_5 &=&2\, a_z ^2  Z_G \,PQ \left( P+Q \right)\geq 0, \\  
B_4&=&a_z^2 \left(a_x^2 Q^2 X_G^2 +  a_y^2 P^2Y_G^2+a_z^2 \left(  P^2+Q^2  +4\,PQ \right) Z_G^2-P^2Q^2\right), \\  
B_3&=&2\, a_z^4 Z_G\, \left(  a_x^2 Q X_G^2 + a_y^2P\,Y_G^2 + a_z^2 \left( P+Q \right)  Z_G^2-  PQ \left( P+Q \right)  \right) ,\\  
B_2&=&a_z^6 Z_G^2 \left(   a_x^2 X_G^2 + a_y^2 Y_G^2 + a_z^2 Z_G^2-  P^2-Q^2  -4\,PQ \right) ,\\  
B_1&=&-2\,a_z^8\,Z_G^3\left( P+Q \right)\leq 0 ,\\  
B_0&=& -a_z^{10}Z_G^4\leq 0\,.   
\end{eqnarray*}
Therefore, the positive root of $B(z)$ will be the coordinate $Z_E$ required.
\begin{proposition}  \label{propG}
The  number of sign changes in the list $[B_6, B_5,B_4,B_3,B_2,B_1,B_0]$ is equal to 1 if $Z_G>0$. 
\end{proposition}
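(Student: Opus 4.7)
The plan is to reduce this sign analysis directly to the one already carried out in Proposition \ref{propSignoBell1}. First, I would dispatch the four ``easy'' coefficients: since $P,Q,a_z,Z_G>0$, the inequalities $B_6=P^2Q^2>0$, $B_5=2a_z^2Z_G PQ(P+Q)>0$, $B_1=-2a_z^8Z_G^3(P+Q)<0$, and $B_0=-a_z^{10}Z_G^4<0$ are immediate from the stated formulas.

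The next step is to observe the algebraic identities
\[
B_4 = -a_z^2\,\bar{A}_2,\qquad B_3 = -a_z^4 Z_G\,\bar{A}_3,\qquad B_2 = -a_z^6 Z_G^2\,\bar{A}_4,
\]
which are verified by a term-by-term comparison with the expressions for $\bar{A}_2,\bar{A}_3,\bar{A}_4$ written out in Section \ref{SecBell}. Since the multipliers $a_z^2$, $a_z^4 Z_G$, $a_z^6 Z_G^2$ are strictly positive under the hypothesis $Z_G>0$, the signs of $B_4,B_3,B_2$ are exactly the \emph{opposite} of the signs of $\bar{A}_2,\bar{A}_3,\bar{A}_4$, respectively.

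With these relations in hand, I would invoke the classification produced inside the proof of Proposition \ref{propSignoBell1}: the signs of $\bar{A}_4,\bar{A}_3,\bar{A}_2$ are entirely controlled by the position of $P_G$ relative to the nested ellipsoids $\bar{e}_2\subset\bar{e}_3\subset\bar{e}_4$, producing seven possible sign patterns for $[\bar{A}_5,\bar{A}_4,\bar{A}_3,\bar{A}_2,\bar{A}_1,\bar{A}_0]$. Translating each of these into a pattern for $[B_6,B_5,B_4,B_3,B_2,B_1,B_0]$ via the sign flip above, and prepending the fixed signs $[+,+]$ and appending $[-,-]$, one obtains seven sequences of the form $[+,+,\ast,\ast,\ast,-,-]$ whose middle three entries transition monotonically from $+$'s (or zeros) to $-$'s as $P_G$ crosses the ellipsoids $\bar{e}_2,\bar{e}_3,\bar{e}_4$ in turn. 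In every case the number of sign changes is exactly $1$.

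The main obstacle is purely bookkeeping: confirming the three identities $B_i=-c_i\bar{A}_j$ with positive $c_i$ by direct coefficient comparison, which is routine but easy to mis-transcribe. Once they are established, the geometric work is entirely reused from Proposition \ref{propSignoBell1} and no new ellipsoid analysis is required.
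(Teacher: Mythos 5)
Your proposal is correct and follows essentially the same route as the paper: the paper's proof likewise observes that the signs of $B_4$, $B_3$, $B_2$ are governed by the ellipsoids $\bar{e}_2\subset\bar{e}_3\subset\bar{e}_4$ from Proposition \ref{propSignoBell1} (your explicit identities $B_4=-a_z^2\bar{A}_2$, $B_3=-a_z^4Z_G\bar{A}_3$, $B_2=-a_z^6Z_G^2\bar{A}_4$ just make that dependence concrete) and then enumerates the same seven sign patterns $[+,+,\ast,\ast,\ast,-,-]$, each with exactly one sign change.
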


\begin{proof}
The signs of $B_2$, $B_3$ and $B_4$ are determined by the ellipsoids $\bar{e}_4$, $\bar{e}_3$ and $\bar{e}_2$, respectively, introduced in the proof of  Proposition \ref{propSignoBell1}. Since $\bar{e}_2 \subset \bar{e}_3 \subset \bar{e}_4$, if $Z_G>0$ the signs of the list $[B_6,B_5,B_4,B_3,B_2,B_1,B_0]$ must be one of the following: \begin{itemize}
\item $[+,+,-,- ,- ,-,-]$ if  $P_G$ is inside $\bar{e}_2$,
\item $[+,+,0,- ,- ,-,-]$ if  $P_G$ is on $\bar{e}_2$,
\item $[+,+,+,- ,- ,-,-]$ if  $P_G$ is outside $\bar{e}_2$ and inside $\bar{e}_3$,
\item $[+,+,+,0,-,-,-]$ if  $P_G$ is on $\bar{e}_3$,
\item $[+,+,+, +,- ,-,-]$ if  $P_G$ is outside $\bar{e}_3$ and inside $\bar{e}_4$,
\item $[+,+,+,+ ,0 ,-,-]$ if  $P_G$ is on $\bar{e}_4$,
\item $[+,+,+,+ ,+ ,-,-]$  if  $P_G$ is outside $\bar{e}_4$.
\end{itemize}
\end{proof}

\noindent Consequently, if $Z_G>0$, $B(z)$ has only one real positive root, which is equal to $Z_E$. Moreover, the polynomials 
$$
B_2(x,z)= \left( Pz + a_z^2Z_G \right) x -  a_x^2 X_G z , \quad B_3(y,z)=\left( Qz + a_z^2 Z_G  \right)  y - a_y^2 Y_G z,
$$
part of the Gr\"obner basis, provide the coordinates $X_E$ and $Y_E$:
\begin{equation}\label{XEfromGb} 
X_E = \frac{a_x^2 X_G Z_E}{\left( PZ_E + a_z^2 Z_G \right)} , \quad Y_E = \frac{a_y^2 Y_G Z_E }{\left( Q Z_E + a_z^2 Z_G \right)}.
\end{equation} 

\noindent On the other hand, if $Z_G = 0$ then $Z_E=0$ and we obtain a new system
$$ 
a_x^2 y^2 + a_y^2 x^2 - a_x^2 a_y^2=0, \quad (a_x^2-a_y^2) x y - a_x^2 X_G y + a_y^2 Y_G x=0,
$$
whose Gr\"obner basis\footnote{Available at \url{http://dx.doi.org/10.17632/xw5ws5gz8x.1}} contains the polynomials 
\begin{equation}\label{g1y}
G_1(y)=R^2 y^4 + 2 a_y ^2 R Y_G y^3 - a_y^2\left( R^2 - a_x^2X_G^2-a_y^2Y_G^2 \right) y^2 - 2a_y^4 R Y_G y - a_y^6 Y_G^2,
\end{equation} 
$$
G_2(x,y)=\left( R y+a_y^2 Y_G \right) x - a_x^2 X_G y \label{g2xy},
$$
which provide the coordinates $Y_E$ and $X_E$. As the coefficients in $y^4$ and $y^3$ of $G_1(y)$ are positive and the coefficient in $y$ and the independent one are negative, the number of changes of signs in the list of coefficients of $G_1(y)$ is equal to 1. Consequently,  $G_1(y)$ has a unique real positive root. 

\noindent Finally, if both $Z_G=0$ and $Y_G=0$ (unusual in practice) then $\varphi=\lambda=0$.


\begin{algorithm}[H]
\begin{algorithmic}[1]
\REQUIRE{The semiaxes of the triaxial reference ellipsoid. \\ \quad \quad \quad The Cartesian coordinates $(X_G,Y_G,Z_G)\neq (0,0,0)$.
}
\ENSURE  The geodetic coordinates $(\varphi,\lambda,h)$.

\IF{$Z_G\neq 0$} 
\STATE {evaluate the coefficients $B_i$, $i=0,\ldots,6$;}\quad \COMMENT{see Proposition \ref{propG}}
\STATE {compute $Z_E$ the unique positive root of $B(z)$;}
\STATE {compute $X_E$ and $Y_E$ from  Equalities (\ref{XEfromGb}); }
\STATE{compute $(\varphi,\lambda)$ from Equalities (\ref{CtoG})  }
\ELSE 
\STATE{ $Z_E=0$; $\varphi=0$;}
 \IF{$Y_G\neq 0$, } 
\STATE{evaluate the coefficients of the polynomial $G_1(y)$;} \quad \COMMENT{see Equations (\ref{g1y})}
\STATE{compute $Y_E$ the unique positive root of $G_1(y)$;}
\STATE{compute $X_E$ the unique real root of $G_2(x,Y_E)$;}
\STATE{compute $\lambda$ from Equalities (\ref{CtoG}) }
\ELSE
\STATE{ $Y_E=0$; $X_E=a_x$; $\lambda=0$ }
\ENDIF 
\ENDIF 
\IF{$f(X_G,Y_G,Z_G)\geq 1$}  
\STATE{$h=|(X_G,Y_G,Z_G)-(X_E,Y_E,Z_E)|$    }
\ELSE
\STATE{$h=-|(X_G,Y_G,Z_G)-(X_E,Y_E,Z_E)|$    }
\ENDIF
\end{algorithmic}
\caption*{{\bf Algorithm} {\tt Cartesian into Geodetic II}}
\end{algorithm}

\section{Numerical tests}\label{SecResults}

Our algorithms have been initially implemented in the Scientific Computing System {\tt Maple 2017}. We have implemented also the methods presented in \cite{feltens09} and \cite{ligas}, in order to accurately compare the results (maximum errors and running times). This initial study showed that the best running times and the best mean values of the maximum deviations were obtained with the algorithms {\tt Cartesian into Geodetic I} and {\tt Cartesian into Geodetic II}.  Nevertheless, the CPU times obtained in {\tt Maple} were high (as other formula processing systems, {\tt Maple} runs in the interpreter mode, and therefore, it runs slow). 

For this reason, the definitive implementation of the aforementioned algorithms has been performed in a compiler-type programing language, specifically in C++. The definitive CPU running times, in C++, differ in an order of magnitude 3 from the initial ones, in {\tt Maple}. The results have been obtained working with double precision, on an Intel(R) Core(TM) i7-7700K CPU @ 4.20 GHz x 8 processor with 62,8GB of RAM.

 
 The considered celestial bodies, together with their shape parameters ($a_x$, $a_y$ and $a_z$ respectively) (see \cite{ligas}, \cite{tierra}, \cite{Seidelmann2007}, \cite{seidelmann02}, \cite{wu}) are as follows: 

\begin{table}[H]
\begin{center}
\begin{tabular}{@{} |l|c|c|c|c| @{}}
\toprule
\small  Celestial body   &  \small  $a_x$  &  \small  $a_y$  &  \small  $a_z$   \\  
\midrule
\small Ariel &  \small 581.1  &  \small 577.9  &  \small 577.7  \\ 
\small Earth &  \small 6378.173435  &  \small 6378.1039  &  \small 6356.7544  \\ 
\small Enceladus  &   \small 256.6  &  \small 251.4  &  \small 248.3  \\ 
\small Europa  &  \small 1564.13  &  \small 1561.23  &  \small 1560.93  \\  
\small Io  &  \small 1829.4  &  \small 1819.3  &  \small 1815.7  \\  
\small Mars &  \small 3394.6  &  \small 3393.3  &  \small 3376.3  \\ 
\small Mimas  &  \small 207.4  &  \small 196.8  &  \small 190.6  \\ 
\small Miranda &  \small 240.4  &  \small 234.2  &  \small 232.9  \\ 
\small Moon  &  \small 1735.55  &  \small 1735.324  &  \small 1734.898  \\  
\small Tethys &  \small 535.6  &  \small 528.2  &  \small 525.8  \\ 
\bottomrule
\end{tabular}
\end{center}
\caption{\small Semiaxes (in km) of the considered celestial bodies}\label{ejes}
\end{table}

Following \cite{ligas}, we consider the points in the first octant defined by the geodetic coordinates $(\varphi_i,\lambda_j,h_k)$, where $\varphi_i = \dfrac{i \pi}{720}$ radians, $i=1 \ldots 359$, $\lambda_j = \dfrac{j \pi}{720}$ radians, $j=1 \ldots 359$, $h_k  = ka_z $ km, $k\in\{0, \pm\dfrac{1}{50}, \pm\dfrac{1}{25}, \pm\dfrac{1}{15}, \pm\dfrac{1}{10}\}  
\label{tests}$. For each point, we compute its Cartesian coordinates from (\ref{PG}) and apply the corresponding algorithm for computing its geodetic coordinates, comparing the obtained values with the initial ones. We have excluded from the points considered for the numerical tests the following cases: $\varphi_0=0$, in which case $Z_G=0$ and $X_GY_G>0$ and Case 3 of Ligas' method can't be applied, as the Jacobian is singular; $\varphi_{360}=\frac{\pi}{2}$, in which case $X_G=Y_G=0$ and the longitude is undefined (see \cite{muller}); $\lambda_0=0$, in which case $Y_G=0$ and $X_G>0$ and Case 2 of Ligas' method can't be applied; and $\lambda_{360}=\frac{\pi}{2}$, in which case $X_G=0$ and $Y_G>0$ and Case 1 of Ligas' method can't be applied. Therefore, we considered, for each algorithm and each celestial body, 359 latitudes, 359 longitudes and 9 heights along the normal, i.e. a total of 1159929 different points. The averaged CPU times are computed by applying the corresponding algorithm once to all these points, and including the computation of the maximum errors.

The following tables present the maximum differences in absolute value between the real, known geodetic coordinates and the computed ones, on a base-10 log scale, together with the mean CPU running times in seconds. A logaritmic scale is a nonlinear scale often used when analyzing a very wide or narrow range of positive quantities. In the following tables, in the second, third and fourth columns, instead of displaying the maximum errors as $\varepsilon=10^{a}$, where $a$ is some negative real number, we display $log_{10}(\varepsilon)=a$.

\begin{table}[H]
\begin{center}
\begin{tabular}{@{} |l|c|c|c|c| @{}}
\toprule
{\small Celestial body }   &  {\small Max. err. $\lambda$}  &  {\small Max. err. $\varphi$}  &  {\small Max. err. $h$}  &  {\small Time    }  \\  
\midrule
{\small Ariel }    &      
{\small \bf{ -18.789}} / {\small -18.664}      &      
{\small -18.664} / {\small  -18.664}       &      
{\small\bf{ -15.634}} / {\small { -15.400}}        &      
{\small 1.179343} / {\small \bf{1.093714}}        \\ 
{\small Earth }     &     
{\small -18.664} / {\small -18.664}       &      
{\small -18.664} / {\small -18.664}        &      
{\small \bf{-14.700}} /  {\small -14.500}        &      
{\small 1.191914} / {\small \bf{1.093113}}        \\ 
{\small Enceladus }     &      
{\small \bf{-18.420}} / {\small -15.940}        &      
{\small {\bf -18.311}} / {\small { -17.885}}      &     
{\small  {\bf -14.512}} / {\small -11.873}         &      
{\small 1.296452} / {\small \bf{1.190336}}          \\ 
{\small Europa }     &       
{\small -18.664} / {\small -18.664}       &       
{\small -18.567} / {\small {\bf -18.664}}        &      
{\small \bf{-15.244}} /  {\small -12.768}        &       
{\small 1.182543} / {\small \bf{1.092273}}        \\ 
{\small Io }     &     
{\small \bf{-18.789}} / {\small -18.664}       &      
{\small  \bf{-18.664}} / \bf{\small -18.664}        &      
{\small \bf{-15.277}} / {\small -14.767}        &      
{\small 1.183251} / {\small \bf{1.092522}}        \\ 
{\small Mars }    &      
{\small -18.664} / {\small -18.567}       &      
{\small \bf{-18.664}} / {\small\bf{ -18.664} }       &      
{\small \bf{-15.000}} / {\small -14.816}      &      
{\small 1.228557} / {\small \bf{1.103634}}        \\ 
{\small Mimas }     &      
{\small -17.698} / {\small \bf{ -18.664}}       &    
{\small -17.550} / {\small \bf{-18.664}}        &      
{\small -14.142} / {\small \bf{ -15.559}}        &      
{\small 1.166641} / {\small \bf{1.152859}}        \\ 
{\small Miranda }     &      
{\small {\bf -18.266}} / {\small -15.793}       &      
{\small {\bf -18.186}} /  {\small {-17.862}}      &      
{\small {\bf -14.426}} / {\small -11.873}        &      
{\small 1.166864} / {\small \bf{1.107854}}        \\ 
{\small Moon }    &      
{\small {\bf -18.789}} / {\small -18.664}       &      
{\small \bf{-18.664}} / {\small{\bf -18.664}}        &     
{\small \bf{-15.244}} / {\small -15.045}       &      
{\small 1.180139} / {\small \bf{1.093541}}        \\ 
{\small Tethys}      &      
{\small \bf{-18.664}} / {\small -17.311}       &     
{\small -18.664} / {\small \bf{-18.664}}        &      
{\small  {\bf -15.371}} / {\small -12.331}         &      
{\small \bf{1.175097}} / {\small 1.196335}       \\ 
\midrule
{\small {\it Mean values} }    &      
\small{ \textbf{\textit{-18.540}}} /  {{\it -17.959}}      &     
\small{\textit{ -18.460}} / \textbf{\textit{ -18.506}}      &      
\small{\textbf{\textit{ -14.955}}} /  {\it -13.893}      &       
\small{\it 1.1950801} / {\textbf{\textit{ 1.1216181}}}        \\  
\bottomrule
\end{tabular}
\end{center}
\caption{\small Results obtained by applying the algorithms {\tt Cartesian into Geodetic I} / {\tt Cartesian into Geodetic II} implemented in C++}\label{BellGB-3decimales}
\end{table}

\begin{table}[H]
\begin{center}
\begin{tabular}{@{} |l|c|c|c|c| @{}}
\toprule
\tiny Celestial body    & \tiny Max. err. $\lambda$  & \tiny Max. err. $\varphi$  & \tiny Max. err. $h$  & \tiny Time      \\  
\midrule
\tiny Ariel     &      
\tiny -17.775 / -17.664 / -18.488       &     
\tiny -18.337 / \bf{-18.789} / -18.664        &     
\tiny -13.664 / -13.662 / -13.663         &     
\tiny 1.270501 / 1.281323 / 1.270980         \\ 
\tiny Earth     &      
\tiny \bf{-18.789} / \bf{-18.789} / \bf{-18.789}        &     
\tiny -18.664 / -18.664 / \bf{-18.789}         &     
\tiny -14.552 / -14.627 / -14.612         &     
\tiny 1.275257 / 1.274793 / 1.270144         \\ 
\tiny Enceladus     &     
\tiny -14.804 / -15.169 / -17.580         &     
\tiny -17.145 / -17.139 / -17.146         &     
\tiny -13.305 / -13.304 / -13.299          &     
\tiny 1.279295 / 1.271683 / 1.270610          \\ 
\tiny Europa     &     
\tiny \bf{-18.789} / \bf{-18.789} / \bf{-18.789}        &     
\tiny \bf{-18.664} / \bf{-18.664} / \bf{-18.664}        &     
\tiny -14.084 / -14.084 / -14.083       &     
\tiny 1.268811 / 1.298762 / 1.322419         \\ 
\tiny Io     &     
\tiny -17.446 / -17.488 / -18.789       &     
\tiny \bf{-18.664} / \bf{-18.664} / \bf{-18.664}        &     
\tiny -14.148 / -14.148 / -14.151         &     
\tiny 1.271437 / 1.271444 / 1.271251         \\  
\tiny Mars     &     
\tiny \bf{-18.789} / \bf{-18.789} / \bf{-18.789}       &     
\tiny \bf{-18.664} / \bf{-18.664} / \bf{-18.664}         &     
\tiny -14.372 / -14.366 / -14.372        &     
\tiny 1.269966 / 1.277228 / 1.273551       \\ 
\tiny Mimas     &     
\tiny -16.583 / -14.260 / -16.780        &     
\tiny -16.851 / -16.786 / -16.851         &     
\tiny -13.185 / -13.186 / -13.183         &     
\tiny 1.348541 / 1.340201 / 1.340244        \\ 
\tiny Miranda     &     
\tiny -14.625 / -15.225 / -17.534        &     
\tiny -17.257 / -17.257 / -17.257         &     
\tiny -13.274 / -13.272 / -13.270         &    
 \tiny 1.270835 / 1.273983 / 1.272070        \\ 
\tiny Moon     &     
\tiny \bf{-18.789} / \bf{-18.789} /\tiny \bf{ -18.789}       &     
\tiny \bf{-18.664} / \bf{-18.664} / \bf{-18.664}        &     
\tiny -14.123 / -14.122 /\tiny -14.122         &     
\tiny 1.238008 / 1.240675 /\tiny 1.245733        \\ 
\tiny Tethys      &     
\tiny -15.733 / -16.062 / -17.886       &     
\tiny -17.972 / -18.187 / \bf{-18.664}         &     
\tiny -13.627 / -13.625 /-13.624          &     
\tiny 1.273102 / 1.277854 / 1.273062        \\ 
\midrule
{\tiny \it Mean values}    &     
\tiny{\it -17.212} / {\it -17.102} / {\it -18.221}       &    
\tiny{\it -18.088} / {\it -18.148} / {\it -18.203}       &     
\tiny{\it -13.833} / {\it -13.840} / {\it -13.838}       &      
\tiny{\it 1.276575} / {\it 1.280795} / {\it 1.281006}          \\
\bottomrule
\end{tabular}
\end{center}
\caption{Results obtained by applying Case 1 / Case 2 / Case 3 of Ligas' method implemented in C++}\label{Ligas123-3decimales}
\end{table}

\begin{table}[H]
\begin{center}
\begin{tabular}{@{} |l|c|c|c|c| @{}}
\toprule
\tiny Celestial body    & \tiny Max. err. $\lambda$  & \tiny Max. err. $\varphi$  & \tiny Max. err. $h$  & \tiny Time      \\  
\midrule
\tiny Ariel     &     
\tiny -9.148  /  \tiny -9.151 / \tiny -9.193    &     
\tiny -11.515 /\tiny -11.533 /\tiny -11.470      &     
\tiny -8.757 /\tiny -8.784 /\tiny -8.760       &      
\tiny 1.397315 /\tiny 1.391594 / \tiny 1.359541     \\ 
\tiny Earth     &    
\tiny  -12.760 / \tiny -12.902 /\tiny -12.898      &     
\tiny -12.768 /\tiny -12.788 /\tiny -12.782        &     
\tiny -9.011 /\tiny -9.034 /\tiny -9.029        &     
\tiny 1.381216 /\tiny 1.363576 /\tiny 1.366670        \\ 
\tiny Enceladus     &     
\tiny -8.062 /\tiny -8.243 /\tiny -8.216        &     
\tiny -10.414 /\tiny -10.609 /\tiny -10.333        &     
\tiny -8.020 /\tiny -8.221 /\tiny -7.992         &     
\tiny 1.623761 /\tiny 1.531733 /\tiny 1.587326          \\ 
\tiny Europa     &     
\tiny -10.044 /\tiny -10.112 /\tiny -10.062        &     
\tiny -12.478 /\tiny -12.479 /\tiny -12.378         &     
\tiny -9.214 /\tiny -9.256 /\tiny -9.242         &     
\tiny 1.161976 /\tiny 1.158426 /\tiny 1.170673          \\ 
\tiny Io     &     
\tiny -9.555 /\tiny -9.697 /\tiny -9.637         &     
\tiny -11.882 /\tiny -12.027 /\tiny -11.868         &     
\tiny -8.650 /\tiny -8.782 /\tiny -8.723         &     
\tiny 1.429542 /\tiny 1.403343 /\tiny 1.415738        \\ 
\tiny Mars     &     
\tiny -10.673 /\tiny -11.691 /\tiny -11.723        &     
\tiny -12.302 /\tiny -12.346 /\tiny -12.325         &     
\tiny -8.793 /\tiny -8.820 /\tiny -8.976         &     
\tiny 1.413745 /\tiny 1.376237 /\tiny 1.392895         \\ 
\tiny Mimas     &     
\tiny -7.592 /\tiny -7.812 /\tiny -7.709        &     
\tiny -9.965 /\tiny -10.162 /\tiny -9.758         &     
\tiny -7.633 /\tiny -7.826 /\tiny -7.560         &     
\tiny 1.951112 /\tiny 1.740943 /\tiny 1.858730         \\ 
\tiny Miranda     &     
\tiny -8.031 /\tiny -8.128 /\tiny -8.070        &     
\tiny -10.394 /\tiny -10.498 /\tiny -10.325         &     
\tiny -8.036 /\tiny -8.117 /\tiny -8.008         &     
\tiny 1.600380 /\tiny 1.574738 /\tiny 1.638333         \\ 
\tiny Moon     &     
\tiny -12.961 /\tiny -11.250 /\tiny -11.259        &     
\tiny -13.227 /\tiny -13.864 /\tiny -13.228         &     
\tiny -10.212 /\tiny -10.412 /\tiny -10.213         &     
\tiny 1.222829 /\tiny 1.220730 /\tiny 1.229268         \\ 
\tiny Tethys      &     
\tiny -8.616 /\tiny -8.721 /\tiny -8.738        &     
\tiny -10.961 /\tiny -11.110 /\tiny -10.921         &     
\tiny -8.265 /\tiny -8.386 /\tiny -8.675          &     
\tiny 1.533092 /\tiny 1.498667 /\tiny 1.534345        \\ 
\midrule
{\tiny \it Mean values}    &      
\tiny {\it -9.744} / {\it -9.771} / {\it -9.751}       &     
\tiny {\it -11.591} /  {\it -11.742} / {\it -11.539}       &      
\tiny {\it -8.659} / {\it -8.764} / {\it -7.113}      &       
\tiny {\it 1.471497} / {\it 1.425999} / {\it 1.455352}          \\  
\bottomrule
\end{tabular}
\end{center}
\caption{Results obtained by applying Case 1 / Case 2 / Case 3 of Feltens' method implemented in C++}\label{Feltens123-3decimales}
\end{table}

In all the considered case studies,  the best and second best running times are obtained with the algorithms {\tt Cartesian into Geodetic II} and {\tt Cartesian into Geodetic I}. Moreover, the best and second best mean values of the maximum deviations obtained in the 10 case studies correspond to our algorithms, except for the second best mean value of the maximum deviation of the longitude (which corresponds to the Case 3 of Feltens' method). The three best results are presented in the following table:

\begin{table}[H]
\begin{center}
\begin{tabular}{@{} |l|c|c|c|c| @{}}
\toprule
\tiny Position  & \tiny  Max. err. $\lambda$  & \tiny  Max. err. $\varphi$  & \tiny  Max. err. $h$  & \tiny  Time      \\  
\midrule
\tiny  Best result    &     
\tiny  {\tt Cartesian into Geodetic I}    &     
\tiny {\tt Cartesian into Geodetic II}      &     
\tiny  {\tt Cartesian into Geodetic I}       &      
\tiny   {\tt Cartesian into Geodetic II}     \\ 
\tiny Second best result     &    
\tiny  Case 3 of Feltens' method      &     
\tiny {\tt Cartesian into Geodetic I}        &     
\tiny {\tt Cartesian into Geodetic II}        &     
\tiny {\tt Cartesian into Geodetic I}        \\ 
\tiny Third best result     &     
\tiny {\tt Cartesian into Geodetic II}        &     
\tiny Case 3 of Feltens' method        &     
\tiny Case 2 of Feltens' method         &     
\tiny Case 2 of Feltens' method          \\ 

\midrule
\bottomrule
\end{tabular}
\end{center}
\caption{Ranking of the three best results in computing the mean values of the maximum deviations and CPU running times}\label{ranking}
\end{table}

These results show that our approaches improve the methods presented in \cite{feltens09} and \cite{ligas}, in terms of both efficiency and accuracy.

\section{Conclusions and further work}\label{SecConclusions}

We have presented two efficient algorithms for the transformation of Cartesian coordinates into geodetic coordinates, for a triaxial reference ellipsoid. Each algorithm is based on the numeric computation of the unique real positive root of a degree 6 polynomial, symbolically generated.  

One of the main topics of our further work consists in studying the case of the hyperboloidal coordinates considered for triaxial reference hyperboloids and providing a similar approach for the transformation of the cartesian coordinates. From the geometric and algebraic points of view, both problems are closely related.
This problem hasn't been tackled before and furthermore there are very few approaches for the biaxial case (see \cite{diaznecula14} for a closed form solution and \cite{feltens11} for a iterative solution). 
 
\section{Acknowledgments}\label{SecAgrad}

The first and third authors are partially supported by FEDER/Ministerio de Ciencia, Innovaci\'on y Universidades - Agencia Estatal de Investigaci\'on/MTM2017-88796-P (Symbolic Computation: new challenges in Algebra and Geometry together with its applications). The second author is partially funded by the project TIN2017-86885-R cofinanced by the EU Feder program. The third author wish to kindly thank Prof. Enrique D. Fern\'andez-Nieto for his support.


%

\bibliographystyle{plain}
\bibliography{tri-gema-ioana-VF-CG}

\end{document}